\newtheorem{theo}{Theorem}
\newcommand{\bea}{\begin{eqnarray}}
\newcommand{\eea}{\end{eqnarray}}
\newcommand{\be}{\begin{equation}}
\newcommand{\ee}{\end{equation}}
\newcommand{\ben}{\begin{equation*}}
\newcommand{\een}{\end{equation*}}
\newcommand{\bean}{\begin{eqnarray*}}
\newcommand{\eean}{\end{eqnarray*}}
\DeclareMathOperator*{\argmin}{arg\,min}
\title{Optimal Experimental Design for Uncertain Systems Based on Coupled Differential Equations}
\date{}
\author{
  Youngjoon Hong\\
 Department of Mathematics\\
 Sungkyunkwan University\\
 Suwon, Republic of Korea\\
  \texttt{hongyj@skku.edu} \\
   \And
 Bongsuk Kwon \\
  Department of Mathematical Sciences\\
  Ulsan National Institute of Science and Technology\\
  Ulsan, Republic of Korea \\
  \texttt{bkwon@unist.ac.kr} \\
    \And
  Byung-Jun Yoon\\
  Department of Electrical and Computer Engineering\\
  Texas A\&M University\\
  College Station, TX, USA\\
  \texttt{bjyoon@ece.tamu.edu} \\ 
}
\begin{document}
\maketitle

\begin{abstract}
We consider the optimal experimental design (OED) problem for an uncertain system described by coupled ordinary differential equations (ODEs), whose parameters are not completely known. The primary objective of this work is to develop a general experimental design strategy that is applicable to any ODE-based model in the presence of uncertainty. For this purpose, we focus on non-homogeneous Kuramoto models in this study as a vehicle to develop the OED strategy. A Kuramoto model consists of $N$ interacting oscillators described by coupled ODEs, and they have been widely studied in various domains to investigate the synchronization phenomena in biological and chemical oscillators. Here we assume that the pairwise coupling strengths between the oscillators are non-uniform and unknown. This gives rise to an uncertainty class of possible Kuramoto models, which includes the true unknown model. Given an uncertainty class of Kuramoto models, we focus on the problem of achieving robust synchronization of the uncertain model through external control. Should experimental budget be available for performing experiments to reduce model uncertainty, an important practical question is how the experiments can be prioritized so that one can select the sequence of experiments within the budget that can most effectively reduce the uncertainty. In this paper, we present an OED strategy that quantifies the objective uncertainty of the model via the mean objective cost of uncertainty (MOCU), based on which we identify the optimal experiment that is expected to maximally reduce MOCU. We demonstrate the importance of quantifying the operational impact of the potential experiments in designing optimal experiments and show that the MOCU-based OED scheme enables us to minimize the cost of robust control of the uncertain Kuramoto model with the fewest experiments compared to other alternatives. The proposed scheme is fairly general and can be applied to any uncertain complex system represented by coupled ODEs.\footnote{This work has been submitted to the IEEE for possible publication. Copyright may be transferred without notice, after which this version may no longer be accessible.}
\end{abstract}

\keywords{Mean objective cost of uncertainty (MOCU) \and optimal experimental design (OED) \and objective uncertainty quantification (objective UQ) \and Kuramoto model}


\section{Introduction}

In many real-world applications, we often have to deal with complex systems, for which we do not have complete knowledge. While collecting more data may lead to better system modeling, there exist many scientific applications in which gathering sufficient data for accurate system identification is practically impossible due to the enormous complexity of the system, prohibitively high cost of data acquisition, or both. Relevant examples abound across various domains, including multi-scale climate modeling for long-term prediction, inference of genome-scale regulatory network for predicting effective intervention strategies, characterization of a material system for optimization of targeted functional properties, just to name a few. In such cases, experimental design should target improving one's knowledge of the uncertain system on aspects that critically affect one's operational goals, be they related to control, classification, filtering, or others.

In this paper, we consider the problem of optimal experimental design (OED) for an uncertain complex system based on coupled ordinary differential equations (ODEs). Specifically, we focus on the Kuramoto oscillator model~\cite{Ku75,Ku03} as a vehicle to develop the OED capabilities, which could be ultimately applied to any uncertain model based on coupled ODEs beyond the Kuramoto model. 
The Kuramoto model has been widely studied by many researchers and has a rich published literature. However, the model has not been much investigated for cases when there is substantial uncertainty in the model.

The primary goal of this paper is to identify the optimal experiment that is expected to effectively reduce the model uncertainty in such a way that minimizes the cost of controlling the uncertain system.
The mean objective cost of uncertainty (MOCU)~\cite{Yoon2013tsp} can be used to quantify the objective-based uncertainty, which then can be used to predict the optimal experiment that maximally reduces the uncertainty pertaining the cost of control~\cite{Dehghannasiri15tcbb}. Recently, MOCU-based OED has been applied to a number of applications such as gene regulatory network intervention~\cite{Dehghannasiri15bmc,Dehghannasiri15tcbb}, adaptive sequential sampling~\cite{Broumand2015pr}, active learning~\cite{Zhao2021WMOCU, Zhao2021SMOCU}, robust filtering~\cite{Zhao2020}, and autonomous materials discovery~\cite{Talapatra2018}. 
While the Kuramoto model has been extensively studied in the past, we would like to note that neither the objective-based uncertainty quantification (UQ) of uncertain Kuramoto models nor OED strategies for reducing the uncertainty present therein have been studied to date.
For the first time, we tackle the experimental design problem for an uncertain coupled-ODE system. 
Unlike previous studies~\cite{Dehghannasiri15tcbb,Dehghannasiri15bmc,Broumand2015pr,Talapatra2018,Zhao2020, ARE2008, Saka88}, we do not assume experiments can directly measure the unknown parameters. 
Instead, we consider realistic experiments, whose outcomes may be used to narrow down the range of the unknown parameters rather than exactly estimating them, which may not be always possible in real applications.


\section{Uncertain Kuramoto Model}

\subsection{Kuramoto model of interacting oscillators}

We consider the Kuramoto model:
\be\label{ku}
\dot \theta_i(t) = \omega_i + \sum_{j=1}^{N} a_{i,j} \sin (\theta_j(t)- \theta_i(t)), \ i=1,\cdots,N,
\ee
where 
$\theta_i = \theta_i(t)$ is the phase of the $i$-th oscillator with the intrinsic natural frequency $\omega_i$, $a_{i,j} (=a_{j,i})$ represents the coupling strength constant between the $i$-th and the $j$-th oscillators, and $N$ is the total number of oscillators in the model. 
The system \eqref{ku} has been first introduced by Yoshiki Kuramoto in~\cite{Ku75, Ku03} to describe the phenomena of collective synchronization observed in the systems of chemical and biological oscillators, in which an ensemble of oscillators spontaneously locks to a common frequency, despite the differences in the natural frequencies of the individual oscillators.
This model makes assumptions that (i) the oscillators are all-to-all, weakly coupled and that (ii) the pairwise interaction between two oscillators depends sinusoidally on their phase difference. 
From a mathematical point of view, the model can be derived by employing the normal form calculation and perturbation method for a system of globally coupled differential equations with stable limits cycles, which is discussed in great details in \cite{Ku03}.
Such collective synchronization phenomena have been observed and investigated in various domains, where circadian rhythms~\cite{L+97}, cortical oscillations in neuroscience~\cite{BHD10, SLK21}, and synchronously flashing fireflies~\cite{Buck76} are well-known examples in biology. There are also examples in engineering and physics, for instance, arrays of lasers~\cite{Jiang:93} and superconducting Josephson junction arrays~\cite{Wie1996}. There have been extensive studies for the Kuramoto model shown in~\eqref{ku}, and we refer interested readers to~\cite{S00, ABRS} for excellent reviews addressing the motivation, derivation, and applications of the model. We also refer to~\cite{DF2011, DF2012, GMT15} and the references therein for other recent theoretical developments.

It is worth noting that the analysis of Kuramoto models in the past, especially their synchronization phenomena, has been made based on mean-field coupling~\cite{ARE2008, Saka88}. Although analytic solutions exist for conditions that guarantee the synchronization of Kuramoto models, they only apply to special cases, such as the homogeneous case when all pairwise coupling strength parameters $a_{i,j}$ are uniform. In our work, we do not constrain our model to such cases, and $a_{i,j}$ can take arbitrary values that are different for different oscillator pairs $(i,j)$. For example, we may even have $a_{i,j}=0$ for some pairs $(i,j)$, such that the interaction graph underlying the Kuramoto model may not be fully connected.

\subsection{Uncertainty class of Kuramoto models}

For Kuramoto models, it is known that the underlying network structure is closely related to synchronization, yet the precise relation is not well understood. Especially we consider the situation where the coupling strengths in the network \eqref{ku} are not fully known.

More precisely, we consider a set of $N$ Kuramoto oscillators, where the natural frequency $\omega_i$ is known for all oscillators. However the interaction strength $a_{i,j}$ between the $i$-th and the $j$-th oscillators is not known with certainty. Instead, we assume that only a lower bound $a_{i,j}^{\ell}$ and an upper bound $a_{i,j}^{u}$ is known for $a_{i,j}$. This gives rise to an uncertainty class $\mathcal{A}$ of $N$ interacting Kuramoto oscillators, where $
\mathbf{a} = \{a_{i,j} \}, \ 1\leq i < j \leq N$
is an uncertain parameter vector. We assume that $\mathbf{a}$ is uniformly distributed in $\mathcal{A}$ following the prior distribution below:
\be
    p(\mathbf{a}) = \begin{cases}
c, & \mbox{if} \ a_{i,j} \in [a_{i,j}^{\ell},a_{i,j}^{u}], \ \forall i,j\\
0, & \mbox{otherwise}
    \end{cases}
    \label{eq:prior}
\ee
where
\be
c=\prod_{i=1}^{N-1}\prod_{j=i+1}^{N} \frac{1}{(a_{i,j}^{u} - a_{i,j}^{\ell})}.
\ee

\subsection{Quantifying the objective cost of uncertainty}


When this uncertain Kuramoto model is initially non-synchronous, can we bring it to synchronization via external control, for example, by connecting an additional oscillator to the rest of the oscillators to catalyze the synchronization? If so, how can it be done optimally?
Suppose we want to ensure the frequency synchronization of the $N$ interacting Kuramoto oscillators with uncertain interaction strength $\mathbf{a}\in\mathcal{A}$ by adding an additional oscillator which interacts with the $N$ oscillators in the original system.
Here the Kuramoto oscillator ensemble $\vartheta(t):=(\theta_1(t), \cdots, \theta_N(t))$ 
is said to achieve \emph{the frequency synchronization asymptotically} if it locks to  a  common frequency such that
\be
    \lim_{t\to\infty} | \dot \theta_i(t) - \dot \theta_j(t) | = 0  \ \ \text{ for all } 1\le i, j \le N.
\ee

We assume that the $(N+1)$-th oscillator added to the original system for control ({\it i.e.}, to achieve frequency synchronization) has a known natural frequency $\omega_{N+1}$ and that its interaction strength with the $i$-th oscillator (part of the original system) is uniform $a_{i,N+1} = a_{N+1}$, $\forall i=1,\cdots,N$. By selecting a sufficiently large interaction strength $a_{N+1}$, we can enforce all $N$ oscillators in the original system to be synchronized with each other in terms of their oscillation frequency (\textit{i.e.}, angular speed). With the introduction of the additional oscillator, now we have
\bea
 \dot \theta_i(t) & = & \omega_i + \sum_{j=1}^{N} a_{i,j} \sin (\theta_j(t)- \theta_i(t)) \nonumber \\ 
 && + a_{N+1} \sin (\theta_{N+1}(t)- \theta_i(t))
\eea
for $i=1,\cdots,N$. Although $a_{N+1}\rightarrow \infty$ would guarantee synchronization, our goal is to minimize the interaction strength $a_{N+1}$ as it affects the cost of control, a larger $a_{N+1}$ resulting in a higher cost for synchronizing the oscillators in the system.

For a given $\mathbf{a}$, we define $\xi(\mathbf{a})$ as the minimum value of the interaction strength $a_{N+1}$ that guarantees synchronization of all oscillators. As $a_{N+1} = \xi(\mathbf{a})$ would be optimal for a specific $\mathbf{a}$, we call it $\xi(\mathbf{a})$ the \textit{optimal interaction strength}. In the presence of uncertainty, we are unable to identify $\xi(\mathbf{a})$ since $\mathbf{a}$ is unknown. Instead, we desire an \textit{optimal robust interaction strength} $\xi^*(\mathcal{A})$ such that
\be
    \xi^*(\mathcal{A}) = \max_{\mathbf{a}\in\mathcal{A}} \xi(\mathbf{a}).   \label{eq:robust}
\ee
Note that it is robust because $a_{N+1} = \xi^*(\mathcal{A})$ guarantees synchronization for any $\mathbf{a}\in\mathcal{A}$. It is optimal because it is the smallest such value.
As we can see from \eqref{eq:robust}, $a_{N+1}$ increases due to the uncertainty, which forces us to choose a larger interaction strength than might be actually needed for synchronization. 
The expected increase of this differential cost can be measured by computing the expected value of the cost increase 
\be
    M(\mathcal{A}) = \mathbb{E}_{\mathbf{a}} \Big[ \xi^*(\mathcal{A}) - \xi(\mathbf{a})\Big],   \label{eq:mocu}
\ee
based on $p(\mathbf{a})$, which governs the distribution of $\mathbf{a}$ within the uncertainty class $\mathcal{A}$. This average differential cost $M(\mathcal{A})$ is referred to as the \textit{mean objective cost of uncertainty (MOCU)}~\cite{Yoon2013tsp}, and it quantifies the impact that the model uncertainty has on the operational objective. When there are two or more objectives, the definition of MOCU in \eqref{eq:mocu} can be further extended as shown in~\cite{Yoon2021multiMOCU}.

\section{Optimal Experimental Design}

Suppose we want to perform additional experiments to reduce the uncertainty class. In general, the outcome of an experiment may reduce the uncertainty class, which may help us predict a better robust controller---in this case, the $(N+1)$-th oscillator with a smaller interaction strength $a_{N+1} = \xi^*(\mathcal{A})$ that ensures the synchronization of all oscillators despite the uncertainty in $\mathbf{a}$. 
While the original oscillators do not necessarily have to be non-synchronous, in such a case the cost of controlling (synchronizing) the Kuramoto model would be zero. To avoid such trivial cases, we assume in our examples that the oscillators in the original model are non-synchronous.
A practical question arises naturally: among the possible experiments, how can we select the optimal experiment? We address this question in what follows.

\subsection{Experimental design space}


We restrict our experimental design space to experiments that test pairwise synchronization between oscillators. Suppose we choose the oscillator pair $(i,j)$ for our experiment, where we initialize the angles to $\theta_i(0) = \theta_j(0)$ and observe whether the two oscillators will become eventually synchronized
in the absence of any influence from all other oscillators. We define a binary random variable $B_{i,j}$ for the experimental outcome, where $B_{i,j}=1$ corresponds to the eventual synchronization of the oscillator pair,  while $B_{i,j}=0$ corresponds to the opposite.

If the interaction strength $a_{i,j}$ between the oscillators $i$ and $j$ were known, the outcome $B_{i,j}$ would be known with certainty. In fact, Theorem~\ref{theo:sync} below shows that the oscillator pair will be synchronized if and only if $|\omega_i - \omega_j| \leq K$, where   $K= 2 a_{i,j}$  in this case.
 \begin{theo} \label{theo:sync}
Consider the Kuramoto model of two-oscillators:
 \begin{equation}\label{Ku2}
 \begin{split}
 \dot \theta_1(t)&= \omega_1 + \frac{K}{2} \sin(\theta_2(t)- \theta_1(t)), \\
  \dot \theta_2(t)&= \omega_2 + \frac{K}{2} \sin(\theta_1(t) - \theta_2(t))
  \end{split}
 \end{equation}
 with the initial angles $\theta_1(0), \theta_2(0)\in[0,2\pi)$.
 Then, for any solutions  $\theta_1$ and $\theta_2$ to \eqref{Ku2}, there holds 
 $|\dot\theta_1(t) - \dot\theta_2(t) | \to 0$ as $t\to\infty$ 
 if and only if
  $|\omega_1-\omega_2|\le K$.
 \end{theo}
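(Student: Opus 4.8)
The plan is to collapse the two‑dimensional system \eqref{Ku2} to a single scalar autonomous ODE for the phase difference and then run an elementary phase‑line analysis. Setting $\phi(t) := \theta_1(t) - \theta_2(t)$ and subtracting the two equations in \eqref{Ku2}, I get
\be
  \dot\phi(t) = \Delta\omega - K\sin\phi(t), \qquad \Delta\omega := \omega_1 - \omega_2,
\ee
and, crucially, $\dot\theta_1(t) - \dot\theta_2(t) = \dot\phi(t)$, so the assertion reduces to: $\dot\phi(t)\to 0$ as $t\to\infty$ if and only if $|\Delta\omega|\le K$. Since the right‑hand side $f(\phi):=\Delta\omega - K\sin\phi$ is smooth, bounded and $2\pi$‑periodic, the solution $\phi$ exists for all $t\ge 0$, and I may track it on $\mathbb{R}$ while still reading off $\dot\phi = f(\phi)$.

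For the ``if'' direction, assume $|\Delta\omega|\le K$. Then $f$ has a zero $\phi^\ast$ with $\sin\phi^\ast = \Delta\omega/K$ in every period. For a scalar autonomous ODE the solution is monotone on any interval where $f(\phi(t))\neq 0$ and, by uniqueness, cannot cross an equilibrium; hence $\phi(t)$ is trapped between consecutive zeros of $f$ and converges monotonically to one of them, which forces $\dot\phi(t)=f(\phi(t))\to 0$. The only case that needs extra care is the threshold $|\Delta\omega| = K$, where $f$ does not change sign and its zeros are degenerate (semi‑stable): when, say, $\Delta\omega = K$ one has $f\ge 0$ with isolated zeros, so $\phi$ is nondecreasing and bounded above by the next zero, again giving $\dot\phi\to 0$.

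For the ``only if'' direction, assume $|\Delta\omega| > K$. Then $f$ never vanishes and is bounded away from zero, $|f(\phi)| \ge |\Delta\omega| - K > 0$ for all $\phi$, so $|\dot\phi(t)| = |f(\phi(t))| \ge |\Delta\omega| - K > 0$ for every $t$; hence $\dot\phi(t)$, equivalently $|\dot\theta_1(t)-\dot\theta_2(t)|$, stays uniformly bounded below and cannot tend to $0$. This proves the contrapositive and closes the equivalence.

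The routine facts invoked — global existence from boundedness of $f$, monotonicity of solutions of $\dot\phi=f(\phi)$ between equilibria, and convergence to an equilibrium — are standard, so the only genuinely delicate point is the non‑hyperbolic threshold case $|\Delta\omega| = K$, which I would dispatch by the sign/monotonicity argument above rather than by linearization. As an aside, one can note that in the synchronized regime each $\dot\theta_i(t)$ converges to the common frequency $(\omega_1+\omega_2)/2$, making the link to frequency synchronization explicit, though this is not needed for the statement.
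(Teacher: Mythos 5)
Your proposal is correct and follows essentially the same route as the paper: subtract the equations to get the scalar autonomous ODE $\dot\phi=\Delta\omega-K\sin\phi$, use phase-line/monotonicity analysis to get $\dot\phi\to 0$ when $|\Delta\omega|\le K$, and bound $|\dot\phi|\ge|\Delta\omega|-K>0$ otherwise. If anything, your explicit monotonicity argument at the threshold $|\Delta\omega|=K$ is slightly more careful than the paper's appeal to $F'(\Theta_*)\le 0$, since that equilibrium is non-hyperbolic there.
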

 \begin{proof}
First assume that $|\omega_1-\omega_2|\le K$. By symmetry, we may assume without loss of generality, that $\omega_1\ge \omega_2$ and that $\theta_1(0)-\theta_2(0)\in [0,2\pi)$. By subtracting the equations of \eqref{Ku2}, one has
  \begin{equation}\label{sub}
 \dot \Theta(t) = \omega_1-\omega_2 - K \sin(\Theta(t)) =:F(\Theta(t)),
 \end{equation}
 where $\Theta(t):= \theta_1(t) - \theta_2(t)$. 
 Since $0\le \omega_1 -\omega_2 \le K$, there is a number $\Theta_*\in[0,\pi/2]$ such that $\sin \Theta_* = (\omega_1-\omega_2) /K\in[0,1]$. 
 We can easily check that $F'(\Theta_*)=-K \cos \Theta_* \le 0$, indicating that $\Theta_*$ is a unique (mod $2\pi$) stable critical point of \eqref{sub}. This implies that $\Theta(t)\to \Theta_*$ (mod $2\pi$) as $t\to\infty$ unless $\Theta(0)=\pi-\Theta_*$, in turn $\dot\Theta(t)\to0$. When $\Theta(0)=\pi-\Theta_*$, which is an unstable critical point, we have $\Theta(t)=\pi-\Theta_*$ for all $t\ge0$. These prove our assertion. 
 On the other hand, if $|\omega_1-\omega_2|>K$, one has from \eqref{sub} that 
\bea| \dot \theta_1(t)- \dot \theta_2(t) | & \ge & | \omega_1-\omega_2 | - K \left| \sin(\theta_1(t)-\theta_2(t)) \right| \nonumber \\
&\ge & | \omega_1-\omega_2 | - K \nonumber > 0
\eea
This completes the proof. 
 \end{proof}

However, the uncertainty regarding $a_{i,j}$ renders $B_{i,j}$ a random variable whose outcome is unknown before performing the pairwise synchronization experiment described above. Suppose our experiment results in the eventual synchronization of the two oscillators ($B_{i,j}=1$). Based on Theorem~\ref{theo:sync}, the inequality $|\omega_i - \omega_j| \leq 2 a_{i,j}$ must hold.
This experimental outcome allows us to update the lower bound of $a_{i,j}$ from $a_{i,j}^{\ell}$ to $\tilde{a}_{i,j}^{\ell}$ defined as
\be
\tilde{a}_{i,j}^{\ell} = \max(\frac{1}{2}|\omega_i - \omega_j|,a_{i,j}^{\ell}).
\ee
On the other hand, we have $|\omega_i - \omega_j| > 2 a_{i,j}$ if the oscillators do not get synchronized ($B_{i,j}=0$),
which allows us to update the upper bound of $a_{i,j}$ from $a_{i,j}^{u}$ to $\tilde{a}_{i,j}^{u}$ given by
\be
    \tilde{a}_{i,j}^{u} = \min (\frac{1}{2}|\omega_i - \omega_j|,a_{i,j}^{u}).
\ee
In either case, the pairwise experiment can potentially reduce the uncertainty regarding $a_{i,j}$, thereby shrinking the uncertainty class $\mathcal{A}$.

\subsection{Selecting the optimal experiment} \label{s:3_B}


Knowing that the aforementioned pairwise experiments can potentially reduce the uncertainty class, how should we prioritize the experiments to select the optimal one? The MOCU framework can be used to predict the optimal experiment that is expected to maximally reduce the uncertainty~\cite{Dehghannasiri15tcbb,Dehghannasiri15bmc,Broumand2015pr,Zhao2020} in such a way that minimizes the cost of uncertainty, namely, the expected cost increase for controlling (\textit{i.e.}, synchronizing) the $N$ Kuramoto oscillators due to the  uncertain interaction strength. More specifically, for every  experiment in the experimental design space, we first compute the expected remaining MOCU after performing the given experiment. Based on these results, we can prioritize the experiments and select the one that is expected to minimize the MOCU that remains after carrying out the experiment.

For convenience, let us denote the uncertainty class $\mathcal{A}$ reduced based on the experimental outcome $B_{i,j}$ as $\mathcal{A}\vert B_{i,j}$. Then the expected remaining MOCU for the synchronization experiment of the oscillator pair $(i,j)$ can be computed by
\bea
R(i,j) & = & \mathbb{E}_{B_{i,j}} \Big[ M(\mathcal{A}\vert B_{i,j})\Big] \nonumber\\ 
& = & \sum_{b\in\{0,1\}} Pr(B_{i,j}=b)M(\mathcal{A}\vert B_{i,j}=b)
\label{eq:remaining_mocu}
\eea
Based on the prior $p(\mathbf{a})$ in \eqref{eq:prior}, we can compute the probabilities for the possible experimental outcomes as follows
\bea
    Pr(B_{i,j}=1) & = & \frac{a_{i,j}^{u}-\tilde{a}_{i,j}}{a_{i,j}^{u}-a_{i,j}^{\ell}}, \\
    Pr(B_{i,j}=0) & = & \frac{\tilde{a}_{i,j}-a_{i,j}^{\ell}}{a_{i,j}^{u}-a_{i,j}^{\ell}}
\eea
where we define 
\be \label{e:a_tilde}
\tilde{a}_{i,j} = \min \left (\max  \left(\frac{1}{2}|\omega_i - \omega_j|,a_{i,j}^{\ell} \right), a_{i,j}^{u} \right )
\ee
to ensure that $\tilde{a}_{i,j} \in [a_{i,j}^{\ell},a_{i,j}^{u}]$.
The optimal oscillator pair, the outcome of whose pairwise synchronization experiment is expected to most effectively improve the control performance among the  $N \choose 2$ pairs can be predicted by
\be
(i^*, j^*) = \argmin_{1\leq i < j \leq N} R(i,j).
\ee

\subsection{Computational complexity} \label{s:3_B}

While computing the entropy of an uncertain coupling strength takes a fixed amount of time  $\mathcal{O}(1)$, the computation of MOCU in \eqref{eq:mocu} requires solving the ordinary differential equation in \eqref{ku} multiple times to estimate the expectation based on Monte Carlo sampling. As a result, the computational complexity of computing the MOCU defined in \eqref{eq:mocu} is
\be
    \mathcal{O}(T S N^2),   \label{eq:mocu_complexity}
\ee
where $T$, $S$, and $N$ denote the final duration of time for solving the differential equation, the sample size (for Monte Carlo sampling), and the number of oscillators in the Kuramoto model, respectively. For practical values of these parameters, please see Section~\ref{s:4}. However, we would like to note that the computation of MOCU does not take a large amount of time in practice, since the main MOCU computation module can be highly parallelized by taking advantage of  modern GPU programming. In fact, we implemented our code using \texttt{PyCUDA}~\cite{PyCUDA12} and ran our simulations with \texttt{Nvidia GTX 1080-Ti}, which reduces the overall computational cost by $\frac{1}{n}$, where $n= 57,344$ is the maximum number of CUDA cores in the graphic card. This massive parallelization allows us to reduce the computational cost significantly by increasing the number of threads on the thread block. For instance, the average runtime for the MOCU computation module was about $2.87$ seconds when $N=5$, $T=4$, and $S = 20,000$.


\section{Simulation Results}\label{s:4}


In this section, we present numerical experiments to demonstrate our proposed OED method described in Section~\ref{s:3_B}. 
A classical fourth-order Runge-Kutta method is used to compute the Kuramoto model in \eqref{ku} for $0 \leq t \leq T$ for $T=5$ with time discretization $\Delta t = 1/160$.
For the sake of simplicity, the initial conditions are set to $\theta_i = 0$, $1 \leq i \leq N$. 
As the system is regular enough, the Runge-Kutta solvers provide reasonably accurate numerical solutions.
For instance, the relative $L^2$-error of a five-oscillator model ($N=5$) is $10^{-10}\sim10^{-9}$ at $t=T$.
Due to the immense uncertainty in parameters, the sample size $S$ for computing the expectation in \eqref{eq:mocu} should be sufficiently large (e.g. $S \geq 20,000$) to obtain reliable experimental results.
In this regard, massive computing power is highly desired, and we adopt GPU parallel computing, \texttt{PyCUDA}~\cite{PyCUDA12}, using \texttt{Nvidia GTX 1080-Ti}.

\begin{figure}
    \centering
    \includegraphics[scale=0.75]{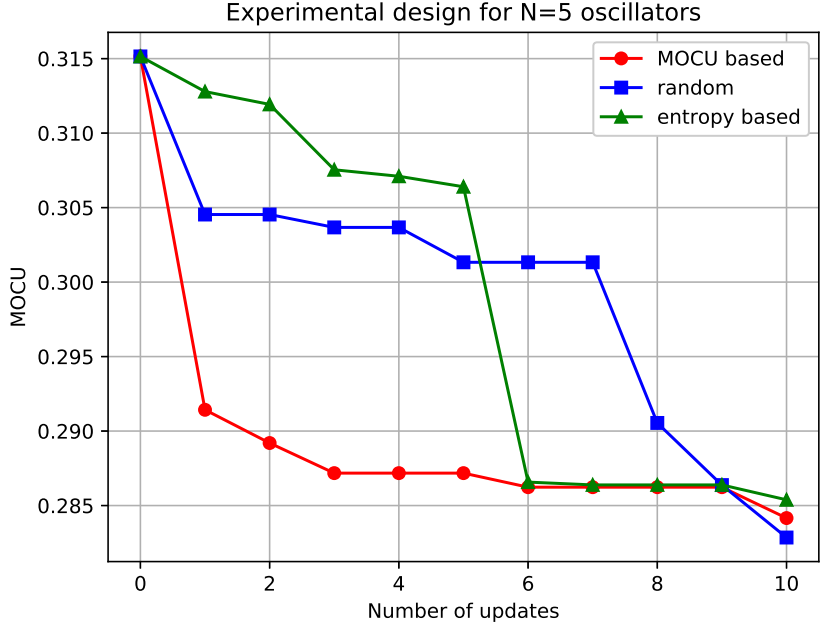}
    \caption{The model uncertainty decreases with sequential experimental updates. The MOCU-based scheme is clearly the most efficient, quickly reducing the uncertainty in fewer updates.}
    \label{fig:5_sample}
\end{figure}

As a paradigm example, we first implemented a 5-oscillator Kuramoto model.
The following natural frequencies were used for the experiment in Figure~\ref{fig:5_sample}: $w_1 = -2.5000, w_2 = -0.6667, w_3 = 1.1667, w_4 = 2.0000, w_5 = 5.8333$.
For the additional oscillator used for control, we simply chose $w_6 = \text{mean}_{1 \leq i \leq 5}w_i$.
The upper and lower bounds of interaction strength were chosen by
\ben
\begin{split}
a_{i,j}^u & = 1.15 d_{i,j} \left(\frac{1}{2}|w_i - w_j|\right),\\
a_{i,j}^l & = 0.85 d_{i,j} \left(\frac{1}{2}|w_i - w_j|\right)
\end{split}
\een
where $d_{i,j}$ is a correction constant.
If $d_{i,j} \equiv 1$ for all $i,j$, the system is already synchronized in general as all entries of the interaction strength $a_{i,j}$ are large enough. 
Hence, we introduced the correction parameter $d_{i,j}$ such that
\ben
d_{i,j} = 
\begin{cases}
1, \quad (i,j) \in \mathcal{I}_1,\\
d^*_{i,j}, \quad (i,j) \in \mathcal{I}_2,\\
\end{cases}
\een
where $0.3 \leq d^*_{i,j} \leq 0.5$, and $\mathcal{I}_1$ and $\mathcal{I}_2$ are partition of the set of indices $\mathcal{I} = \{(i,j) \in \mathbb{N}: 1 \leq i,j \leq N, i < j \}.$
Here, the set $\mathcal{I}_1, \mathcal{I}_2$ and the corresponding quantity $d^*_{i,j}$ were empirically determined. This results in a class of uncertain Kuramoto models that are non-synchronous, fully connected, but whose coupling strengths are uncertain.

To compare numerical results, we carried out a sequence of experiments based on three different experiment selection strategies: MOCU-based, random selection, and entropy-based.
In the MOCU-based selection strategy, the pairwise experiment with the smallest expected remaining MOCU in \eqref{eq:remaining_mocu} was chosen, and the corresponding entry of $a^u_{i,j}$ or $a^\ell_{i,j}$ was updated based on the experimental outcome.
More precisely, from the result of the pairwise experiment between the $i$-th and the $j$-th oscillators, if the oscillators were synchronized, then the lower bound $a^\ell_{i,j}$ was updated to $\tilde a_{i,j}$ defined in \eqref{e:a_tilde}. Otherwise, the upper bound $a^u_{i,j}$ was updated to $\tilde a_{i,j}$.
In the entropy-based approach, we selected the pairwise experiment with the largest value of $a^u_{i,j} - a^\ell_{i,j}$, and the corresponding entry of $a^u_{i,j}$ or $a^\ell_{i,j}$ was updated based on the outcome in the same fashion.
In the random approach, we randomly chose one of the $N \choose 2$ possible experiments and updated the corresponding entry of $a^u_{i,j}$ or $a^\ell_{i,j}$ based on the experimental outcome.
Figure~\ref{fig:5_sample} shows that while all three methods eventually reached the minimum attainable uncertainty level after exhausting all experiments, the MOCU-based experiment selection strategy led to the most efficient updates. Especially, it identified effective experiments early on, nearly reaching the minimum attainable uncertainty level in just 3 updates.

\begin{figure}
    \centering
    \includegraphics[scale=0.75]{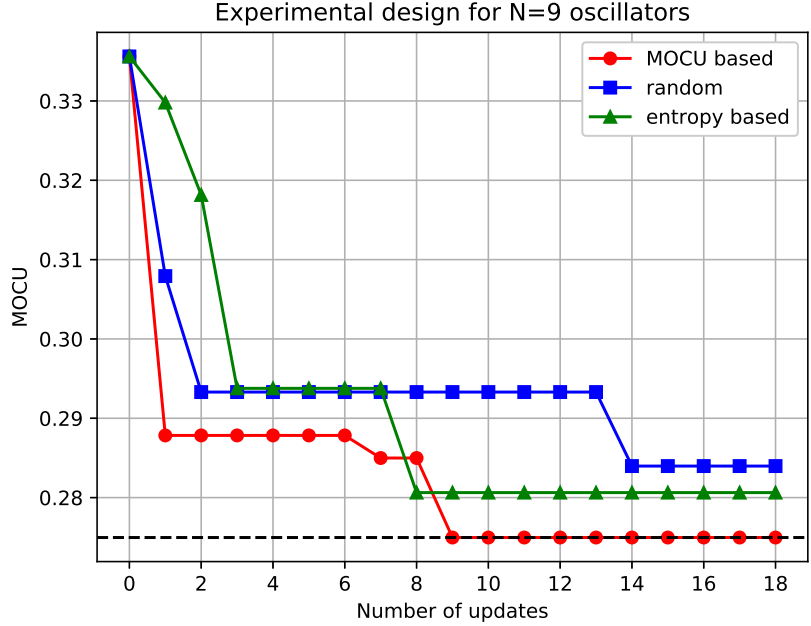}
    \caption{The model uncertainty decreases with sequential experimental updates. The baseline shows the minimum attainable uncertainty after all possible updates (36 experiments in total).}
    \label{fig:9_sample}
\end{figure}

Next, we conducted numerical simulations with a larger number of oscillators, $N=9$, in which case the number of possible experiments increases to ${9 \choose 2} = 36$.
We used the following natural frequencies: $w_1 = 1.19, w_2 = 3.23, w_3 = 6.34, w_4 = 7.48, w_5 = 10.9, w_6 = 11.62, w_7 = 14.74, w_8 = 29.58, w_9 = 38.88.$
We set the natural frequency of the additional oscillator to $w_{10} = \frac{1}{2}\text{mean}_{1 \leq i \leq 9}w_i$.
To start with a non-synchronized model, we selected sufficiently large natural frequencies for $w_8$ and $w_9$.
The upper and lower bounds of the interaction strengths were set to
\ben
\begin{split}
a_{i,j}^u & = 1.03 d_{i,j} \left(\frac{1}{2}|w_i - w_j|\right), \\ 
a_{i,j}^l & = 0.97 d_{i,j} \left(\frac{1}{2}|w_i - w_j|\right)
\end{split}
\een
where $d_{i,j}$ is the correction parameter and that was empirically determined as in the previous example.
Figure~\ref{fig:9_sample} shows the simulation results, which clearly demonstrate that the MOCU-based experimental design results in the most efficient updates among all three methods.
Here we considered up to 18 updates (out of 36 experiments in total). As shown in Figure~\ref{fig:9_sample}, the MOCU-based OED was able to drastically reduce uncertainty with a single update, and it reached the minimum uncertainty level just in 9 updates.


\section{Concluding Remarks}

As shown in our results, designing effective experiments for complex uncertain systems requires quantifying the state of our current knowledge of the system and measuring  the impact of the remaining uncertainty  on the operator performance. It is clear that we cannot expect system-agnostic black-box optimization schemes to perform well. Furthermore, experiments that aim to enhance our knowledge regarding parameters with the largest uncertainties do not necessarily help, as they may not be pertinent to the operator performance. Our work shows that the MOCU-based OED framework can effectively prioritize the experiments in the design space by quantifying the impact of their potential outcomes on the operational goal using scientific knowledge as in Theorem~\ref{theo:sync}.

The proposed OED strategy can be immediately applied to various scientific problems that utilize Kuramoto models for studying synchronization phenomena in diverse fields. For example, Kuramoto models have been widely used for investigating brain network synchronization~\cite{Kitzbichler2009, Choi2019} and its relation to neurological disorders~\cite{Mohseni2017}. Different brain regions may be represented by different oscillators in a Kuramoto model, where the coupling strengths reflect the underlying neuronal connectivity between regions. The resulting Kuramoto model is well-known for its capability to effectively capture oscillatory brain dynamics~\cite{Kitzbichler2009, Choi2019}. The graph structure underlying the Kuramoto model and the actual coupling strength may not be known with certainty and they may have to be estimated from neuroimaging data~\cite{De2014, Wang2011} or through experiments that combine imaging techniques with noninvasive brain stimulation~\cite{Shafi2012}. The uncertain Kuramoto model may be used for various tasks, including the prediction of a personalized structure–function relationship~\cite{Bansal2018} or therapeutic modulation of brain activity for management of neurological disorders~\cite{Shafi2012, Mohseni2017}.

Beyond the Kuramoto models, on which we focused in this study, our MOCU-based OED strategy can be adapted to a wide range of other uncertain systems that are described by ordinary differential equations. For example, one may apply the proposed OED scheme to the effective uncertainty reduction in non-linear systems considered in~\cite{He2017, He2018, Lyu2020}.

An interesting direction for future research is to expand the experimental design space to encompass experiments, whose outcomes cannot be directly used to measure the uncertain model parameters (as assumed in many past studies~\cite{Dehghannasiri15tcbb,Dehghannasiri15bmc,Broumand2015pr,Talapatra2018,Zhao2020}) or to
reduce model uncertainty by leveraging mathematical theorems (as we have proposed in this work). In general, we expect that taking a Bayesian approach would be most effective for addressing this inverse problem~\cite{Stuart2010, Ray2013, Adler2018}, and we are currently investigating strategies for Bayesian inversion of experimental outcomes to effectively reduce the objective model uncertainty.

\bibliographystyle{unsrt}

\bibliography{references.bib}

\begin{thebibliography}{10}

\bibitem{Ku75}
Yoshiki Kuramoto.
\newblock Self-entrainment of a population of coupled non-linear oscillators.
\newblock In {\em International symposium on mathematical problems in
  theoretical physics}, pages 420--422. Springer, 1975.

\bibitem{Ku03}
Yoshiki Kuramoto.
\newblock {\em Chemical Oscillations, Waves, and Turbulence}.
\newblock Courier Corporation, 2003.

\bibitem{Yoon2013tsp}
Byung-Jun Yoon, Xiaoning Qian, and Edward~R. Dougherty.
\newblock Quantifying the objective cost of uncertainty in complex dynamical
  systems.
\newblock {\em IEEE Transactions on Signal Processing}, 61(9):2256--2266, 2013.

\bibitem{Dehghannasiri15tcbb}
Roozbeh Dehghannasiri, Byung-Jun Yoon, and Edward~R Dougherty.
\newblock Optimal experimental design for gene regulatory networks in the
  presence of uncertainty.
\newblock {\em IEEE/ACM Transactions on Computational Biology and
  Bioinformatics (TCBB)}, 12(4):938--950, 2015.

\bibitem{Dehghannasiri15bmc}
Roozbeh Dehghannasiri, Byung-Jun Yoon, and Edward~R. Dougherty.
\newblock Efficient experimental design for uncertainty reduction in gene
  regulatory networks.
\newblock {\em BMC Bioinformatics}, 16(13):S2, 2015.

\bibitem{Broumand2015pr}
Ariana Broumand, Mohammad~Shahrokh Esfahani, Byung-Jun Yoon, and Edward~R
  Dougherty.
\newblock Discrete optimal bayesian classification with error-conditioned
  sequential sampling.
\newblock {\em Pattern Recognition}, 48(11):3766--3782, 2015.

\bibitem{Zhao2021WMOCU}
Guang Zhao, Edward Dougherty, Byung-Jun Yoon, Francis Alexander, and Xiaoning
  Qian.
\newblock Uncertainty-aware active learning for optimal {B}ayesian classifier.
\newblock In {\em 9th International Conference on Learning Representations
  (ICLR)}, 2021.

\bibitem{Zhao2021SMOCU}
Guang Zhao, Edward Dougherty, Byung-Jun Yoon, Francis Alexander, and Xiaoning
  Qian.
\newblock {B}ayesian active learning by soft mean objective cost of
  uncertainty.
\newblock In {\em 24th International Conference on Artificial Intelligence and
  Statistics (AISTATS)}, 2021.

\bibitem{Zhao2020}
G.~Zhao, X.~Qian, B.-J. Yoon, F.~J. Alexander, and E.~R. Dougherty.
\newblock Model-based robust filtering and experimental design for stochastic
  differential equation systems.
\newblock {\em IEEE Transactions on Signal Processing}, 68:3849--3859, 2020.

\bibitem{Talapatra2018}
Anjana Talapatra, Shahin Boluki, Thien Duong, Xiaoning Qian, Edward Dougherty,
  and Raymundo Arr{\'o}yave.
\newblock Autonomous efficient experiment design for materials discovery with
  {B}ayesian model averaging.
\newblock {\em Physical Review Materials}, 2(11):113803, 2018.

\bibitem{ARE2008}
Alex Arenas, Albert Daz-Guilera, Jurgen Kurths, Yamir Moreno, and Changsong
  Zhou.
\newblock Synchronization in complex networks.
\newblock {\em Physics Reports}, 469(3):93 -- 153, 2008.

\bibitem{Saka88}
Hidetsugu Sakaguchi.
\newblock {Cooperative Phenomena in Coupled Oscillator Systems under External
  Fields}.
\newblock {\em Progress of Theoretical Physics}, 79(1):39--46, 01 1988.

\bibitem{L+97}
C~Liu, D~R Weaver, S~H Strogatz, and S~M Reppert.
\newblock Cellular construction of a circadian clock: period determination in
  the suprachiasmatic nuclei.
\newblock {\em Cell}, 6:855--860, 1997.

\bibitem{BHD10}
Michael Breakspear, Stewart Heitmann, and Andreas Daffertshofer.
\newblock Generative models of cortical oscillations: Neurobiological
  implications of the kuramoto model.
\newblock {\em Frontiers in Human Neuroscience}, 4:190, 2010.

\bibitem{SLK21}
Ruben Schmidt, Karl J.~R. LaFleur, Marcel~A. de~Reus, Leonard~H. van~den Berg,
  and Martijn~P. van~den Heuvel.
\newblock Kuramoto model simulation of neural hubs and dynamic synchrony in the
  human cerebral connectome.
\newblock {\em BMC Neuroscience}, 16(1):54, 2015.

\bibitem{Buck76}
J.~Buck and E.~Buck.
\newblock Synchronous fireflies.
\newblock {\em Scientific American}, 234(5):74--79, 82, 1976.
\newblock cited By 223.

\bibitem{Jiang:93}
Ziping Jiang and Martin McCall.
\newblock Numerical simulation of a large number of coupled lasers.
\newblock {\em J. Opt. Soc. Am. B}, 10(1):155--163, Jan 1993.

\bibitem{Wie1996}
K.~Wiesenfeld, P.~Colet, and S.H. Strogatz.
\newblock Synchronization transitions in a disordered josephson series array.
\newblock {\em Physical Review Letters}, 76(3):404--407, 1996.
\newblock cited By 456.

\bibitem{S00}
Steven Strogatz.
\newblock From {Kuramoto} to {Crawford}: exploring the onset of synchronization
  in populations of coupled oscillators.
\newblock {\em Physica D}, 143(1):1--20, 2000.

\bibitem{ABRS}
Juan~A. Acebr\'on, L.~L. Bonilla, Conrad~J. P\'erez~Vicente, F\'elix Ritort,
  and Renato Spigler.
\newblock The kuramoto model: A simple paradigm for synchronization phenomena.
\newblock {\em Rev. Mod. Phys.}, 77:137--185, Apr 2005.

\bibitem{DF2011}
F.~Dorfler and Francesco Bullo.
\newblock On the critical coupling for kuramoto oscillators.
\newblock {\em SIAM J. APPLIED DYNAMICAL SYSTEMS}, 10(3):1070–1099, 2011.

\bibitem{DF2012}
F.~Dorfler and Francesco Bullo.
\newblock Synchronization and transient stability in power networks and
  nonuniform kuramoto oscillators.
\newblock {\em SIAM J. CONTROL OPTIM.}, 50(3):1616–1642, 2012.

\bibitem{GMT15}
A.~{Gushchin}, E.~{Mallada}, and A.~{Tang}.
\newblock Synchronization of heterogeneous kuramoto oscillators with graphs of
  diameter two.
\newblock In {\em 2015 49th Annual Conference on Information Sciences and
  Systems (CISS)}, pages 1--6, 2015.

\bibitem{Yoon2021multiMOCU}
Byung-Jun Yoon, Xiaoning Qian, and Edward~R Dougherty.
\newblock Quantifying the multi-objective cost of uncertainty.
\newblock {\em arXiv preprint arXiv:2010.04653}, 2020.

\bibitem{PyCUDA12}
Andreas {Kl{\"o}ckner}, Nicolas {Pinto}, Yunsup {Lee}, B.~{Catanzaro}, Paul
  {Ivanov}, and Ahmed {Fasih}.
\newblock {PyCUDA and PyOpenCL: A Scripting-Based Approach to GPU Run-Time Code
  Generation}.
\newblock {\em Parallel Computing}, 38(3):157--174, 2012.

\bibitem{Kitzbichler2009}
Manfred~G Kitzbichler, Marie~L Smith, S{\o}ren~R Christensen, and Ed~Bullmore.
\newblock Broadband criticality of human brain network synchronization.
\newblock {\em PLoS Comput Biol}, 5(3):e1000314, 2009.

\bibitem{Choi2019}
Hannah Choi and Stefan Mihalas.
\newblock Synchronization dependent on spatial structures of a mesoscopic
  whole-brain network.
\newblock {\em PLoS computational biology}, 15(4):e1006978, 2019.

\bibitem{Mohseni2017}
Ali Mohseni, Shahriar Gharibzadeh, and Fatemeh Bakouie.
\newblock The role of driver nodes in managing epileptic seizures: Application
  of kuramoto model.
\newblock {\em Journal of theoretical biology}, 419:108--115, 2017.

\bibitem{De2014}
Fabrizio de~Vico~Fallani, Jonas Richiardi, Mario Chavez, and Sophie Achard.
\newblock Graph analysis of functional brain networks: practical issues in
  translational neuroscience.
\newblock {\em Philosophical Transactions of the Royal Society B: Biological
  Sciences}, 369(1653):20130521, 2014.

\bibitem{Wang2011}
Jin-Hui Wang, Xi-Nian Zuo, Suril Gohel, Michael~P Milham, Bharat~B Biswal, and
  Yong He.
\newblock Graph theoretical analysis of functional brain networks: test-retest
  evaluation on short-and long-term resting-state functional mri data.
\newblock {\em PloS one}, 6(7):e21976, 2011.

\bibitem{Shafi2012}
Mouhsin~M Shafi, M~Brandon Westover, Michael~D Fox, and Alvaro Pascual-Leone.
\newblock Exploration and modulation of brain network interactions with
  noninvasive brain stimulation in combination with neuroimaging.
\newblock {\em European Journal of Neuroscience}, 35(6):805--825, 2012.

\bibitem{Bansal2018}
Kanika Bansal, Johan Nakuci, and Sarah~Feldt Muldoon.
\newblock Personalized brain network models for assessing structure--function
  relationships.
\newblock {\em Current Opinion in Neurobiology}, 52:42--47, 2018.

\bibitem{He2017}
Shuping He and Jun Song.
\newblock Finite-time sliding mode control design for a class of uncertain
  conic nonlinear systems.
\newblock {\em IEEE/CAA Journal of Automatica Sinica}, 4(4):809--816, 2017.

\bibitem{He2018}
Shuping He, Qilong Ai, Chengcheng Ren, Jun Dong, and Fei Liu.
\newblock Finite-time resilient controller design of a class of uncertain
  nonlinear systems with time-delays under asynchronous switching.
\newblock {\em IEEE Transactions on Systems, Man, and Cybernetics: Systems},
  49(2):281--286, 2018.

\bibitem{Lyu2020}
Xiaoxiao Lyu, Qilong Ai, Zhiguo Yan, Shuping He, Xiaoli Luan, and Fei Liu.
\newblock Finite-time asynchronous resilient observer design of a class of
  non-linear switched systems with time-delays and uncertainties.
\newblock {\em IET Control Theory \& Applications}, 14(7):952--963, 2020.

\bibitem{Stuart2010}
Andrew~M Stuart.
\newblock Inverse problems: a bayesian perspective.
\newblock {\em Acta numerica}, 19:451, 2010.

\bibitem{Ray2013}
Kolyan Ray et~al.
\newblock Bayesian inverse problems with non-conjugate priors.
\newblock {\em Electronic Journal of Statistics}, 7:2516--2549, 2013.

\bibitem{Adler2018}
Jonas Adler and Ozan {\"O}ktem.
\newblock Deep {B}ayesian inversion.
\newblock {\em arXiv preprint arXiv:1811.05910}, 2018.

\end{thebibliography}

\end{document}